\documentclass[12pt]{article}
%%%%%%%%%%%%%%%%%%%%%%%%%%%%%%%%%%%%%%%%%%%%%%%%%%%%%%%%%%%%%%%%%%%%%%%%%%%%%%%%%%%%%%%%%%%%%%%%%%%%%%%%%%%%%%%%%%%%%%%%%%%%%%%%%%%%%%%%%%%%%%%%%%%%%%%%%%%%%%%%%%%%%%%%%%%%%%%%%%%%%%%%%%%%%%%%%%%%%%%%%%%%%%%%%%%%%%%%%%%%%%%%%%%%%%%%%%%%%%%%%%%%%%%%%%%%
\usepackage{graphicx}
\usepackage{amsmath}

\setcounter{MaxMatrixCols}{10}
%TCIDATA{OutputFilter=LATEX.DLL}
%TCIDATA{Version=5.50.0.2890}
%TCIDATA{<META NAME="SaveForMode" CONTENT="1">}
%TCIDATA{BibliographyScheme=Manual}
%TCIDATA{Created=Tue Nov 11 22:15:43 2008}
%TCIDATA{LastRevised=Tuesday, December 29, 2009 00:21:12}
%TCIDATA{<META NAME="GraphicsSave" CONTENT="32">}
%TCIDATA{<META NAME="DocumentShell" CONTENT="General\Blank Document">}
%TCIDATA{Language=American English}
%TCIDATA{CSTFile=LaTeX article (bright).cst}

\newtheorem{theorem}{Theorem}

\newtheorem{lemma}[theorem]{Lemma}

\newenvironment{proof}[1][Proof]{\textbf{#1.} }{\ \rule{0.5em}{0.5em}}
\input{tcilatex}
\begin{document}

\author{Marek Galewski \\
%EndAName
Faculty of Mathematics and Computer Science, \\
University of Lodz,\\
Banacha 22, 90-238 Lodz, Poland, \\
galewski@math.uni.lodz.pl}
\title{Dependence on parameters for a discrete Emden-Fowler equation}
\maketitle

\begin{abstract}
We investigate the dependence on parameters for the discrete boundary value
problem connected with the Emden-Fowler equation. A variational method is
used in order to obtain a general scheme allowing for investigation the
dependence on paramaters of discrete boundary value problems.
\end{abstract}

\textbf{MSC Subject Classification:}\textit{\ 34B16, 39M10}

\textbf{Keywords:}\textit{\ variational method; discrete Emden-Fowler
equation; coercivity; dependence on parameters}

\section{Introduction}

The discrete version of the Emden-Fowler equation received some considerable
interest lately by the use of critical point theory, see for example \cite%
{guojde}, \cite{guoSecond}, \cite{HeWe}, \cite{liangweng}. Various
variational approaches towards the existence of solutions for this problem
can be applied as being the finite dimensional counterparts of the methods
used in the continuous variational case. However, due to the finite
dimensionality of the space in which solutions are obtained, we have much
more tools at our disposal. In finite dimensional space the weak solution -
which is a key notion in the variational approach - is always a strong one.
Also weak convergence, and thus weak lower semicontinuity coincides with
strong convergence and classical lower semicontinuity which can in turn be
obtained with no additional convexity assumption. Moreover, the coercivity
of the action functional can be very often investigated together with its
anti-coercivity which of course involves different growth conditions on the
nonlinear term. Thus we have a lot of tools at our disposal as far as the
existence is concerned, compare with \cite{agrawal}, \cite{gueFIRST}.
Although the application of variational methods to the discrete problems is
rather a new topic, started apparently by \cite{baiXu}, \cite{gueFIRST} the
list of our references is by no means complete since research in this area
has been very active.\bigskip

In the boundary value problems for differential equations it is also
important to know whether the solution, once its existence is proved,
depends continuously on a functional parameter. This question has a great
impact on future applications of any model. As it is well known difference
equations serve as mathematical models in diverse areas, such as economy,
biology, computer science, finance, see for example \cite{agarwalBOOK}, \cite%
{elyadi}, \cite{lak}. Thus it is desirable to know\ whether the solution to
the small deviation from the model would return, in a continuous way, to the
solution of the original model. This is known in differential equation as
stability or continuous dependence on parameter, see \cite{LedzewiczWalczak}%
, but it has not been investigated in the area of boundary value problems
for difference equations, apart from some work done in \cite%
{galewskiCOERCIVE}. In this paper we are going to get some general scheme
for investigating the dependence on parameter in difference equations which
we illustrate with the Emden-Fowler equation as a model example. Thus our
interest lies not in the existence of solutions, which in fact has been
vastly researched, but in their dependence on parameters. We mention that
the approach of \cite{galewskiCOERCIVE} required that each boundary value
problem should be treated separately, while in this submission we provide
some general result which could be further applied for any discrete boundary
value problem for which the action functional is either coercive or
anti-coercive.

\section{Problem formulation and main results}

In what follows $T$ is a fixed natural number, $T\geq 3$; $\left[ a,b\right] 
$, where $a\leq b$ are integers, stands for the discrete interval $\left\{
a,a+1,...,b-1,b\right\} $; $M>0$ is fixed, $L_{M}=\left\{ v\in C\left( \left[
1,T\right] ,R\right) :\left\Vert v\right\Vert _{C}\leq M\right\} $; $%
\left\Vert v\right\Vert _{C}=\max_{k\in \left[ 1,T\right] }\left\vert
v\left( k\right) \right\vert $. We consider the discrete equation \ 
\begin{equation}
\Delta \left( p\left( k-1\right) \Delta x\left( k-1\right) \right) +q\left(
k\right) x\left( k\right) +f\left( k,x\left( k\right) ,u\left( k\right)
\right) =g\left( k\right)  \label{emden}
\end{equation}%
subject to a parameter $u\in L_{M}$ and with boundary conditions 
\begin{equation}
x\left( 0\right) =x\left( T\right) \text{, }p\left( 0\right) \Delta x\left(
0\right) =p\left( T\right) \Delta x\left( T\right)  \label{emden-bd}
\end{equation}%
known as the discrete version of the Emden-Fowler equation. We assume that$%
\bigskip $

\textbf{A1}\textit{\ }$f\in C\left( \left[ 1,T\right] \times R\times \left[
-M,M\right] ,R\right) $\textit{, }$p\in C\left( \left[ 0,T+1\right]
,R\right) ,$\textit{\ }$q,g\in C\left( \left[ 1,T\right] ,R\right) $\textit{%
; }$g\left( k_{1}\right) \neq 0$\textit{\ for certain }$k_{1}\in \left[ 1,T%
\right] $\textit{.}\bigskip\ 

The growth conditions on $f$ will be given later on. Solutions to (\ref%
{emden})-(\ref{emden-bd}) are such functions $v:\left[ 0,T+1\right]
\rightarrow R$ that satisfy (\ref{emden}) as identity and further $v\left(
0\right) =v\left( T\right) $, $p\left( 0\right) \Delta v\left( 0\right)
=p\left( T\right) \Delta v\left( T\right) $. Hence solutions to (\ref{emden}%
)-(\ref{emden-bd}) are investigated on a finite dimensional space 
\begin{equation*}
E=\left\{ v:\left[ 0,T+1\right] \rightarrow R:v\left( 0\right) =v\left(
T\right) ,\text{ }p\left( 0\right) \Delta v\left( 0\right) =p\left( T\right)
\Delta v\left( T\right) \right\} .
\end{equation*}%
Any function from $E$ can be identified with a vector from $R^{T}$ and
therefore solutions to (\ref{emden})-(\ref{emden-bd}) can be investigated in 
$R^{T}$ with classical Euclidean norm. By $\left\vert \cdot \right\vert $ we
denote the Euclidean norm, and by $\left\langle \cdot ,\cdot \right\rangle $
the scalar product. We note that with \textbf{A1}\ any solution to (\ref%
{emden})-(\ref{emden-bd}) is in fact nontrivial in the sense that no
function $v:\left[ 0,T+1\right] \rightarrow R$ such that $v\left( k\right)
=0 $ for all $k\in \left[ 1,T\right] $ would satisfy (\ref{emden})-(\ref%
{emden-bd}). To reach this conclusion suppose that $v:\left[ 0,T+1\right]
\rightarrow R$ such that $v\left( k\right) =0$ for all $k\in \left[ 1,T%
\right] $ satisfies (\ref{emden})-(\ref{emden-bd}). We see that at least for 
$k_{1}$ it follows $0=g\left( k_{1}\right) \neq 0$.\bigskip

Variational approach towards (\ref{emden})-(\ref{emden-bd}) relays on
investigation of critical points to a suitable action functional. Thus
problem (\ref{emden})-(\ref{emden-bd}), but without parameter $u$ and a
forcing term $g$, can be considered either as it stands, as it is done in 
\cite{AppMathLett}, or else one may write it in a matrix form in which form
we will further investigate it. Let us denote as in \cite{HeWe} 
\begin{equation*}
M=\left[ 
\begin{array}{cccccc}
p\left( 0\right) +p\left( 1\right) & -p\left( 1\right) & 0 & \ldots & 0 & 
-p\left( 0\right) \\ 
-p\left( 1\right) & p\left( 1\right) +p\left( 2\right) & -p\left( 2\right) & 
\ldots & 0 & 0 \\ 
0 & -p\left( 2\right) & p\left( 2\right) +p\left( 3\right) & \ldots & 0 & 0
\\ 
\vdots & \vdots & \vdots & \ddots & \vdots & \vdots \\ 
0 & 0 & 0 & \ldots & p\left( T-2\right) +p\left( T-1\right) & -p\left(
T-1\right) \\ 
-p\left( 0\right) & 0 & 0 & \ldots & -p\left( T-1\right) & p\left(
T-1\right) +p\left( 0\right)%
\end{array}%
\right]
\end{equation*}%
and 
\begin{equation*}
Q=\left[ 
\begin{array}{cccccc}
-q\left( 1\right) & 0 & 0 & \ldots & 0 & 0 \\ 
0 & -q\left( 2\right) & 0 & \ldots & 0 & 0 \\ 
0 & 0 & -q\left( 3\right) & \ldots & 0 & 0 \\ 
\vdots & \vdots & \vdots & \ddots & \vdots & \vdots \\ 
0 & 0 & 0 & \ldots & -q\left( T-1\right) & 0 \\ 
0 & 0 & 0 & \ldots & 0 & -q\left( T\right)%
\end{array}%
\right] .
\end{equation*}%
For a fixed $u\in L_{M}$ we introduce the action functional $%
J:R^{T}\rightarrow R$ for (\ref{emden})-(\ref{emden-bd}) by the formula 
\begin{equation}
J\left( x,u\right) =\frac{1}{2}\left\langle \left( M+Q\right)
x,x\right\rangle -\sum_{k=1}^{T}F\left( k,x\left( k\right) ,u\left( k\right)
\right) +\sum_{k=1}^{T}g\left( k\right) x\left( k\right) .  \label{AcFun}
\end{equation}%
Calculating the G\^{a}teaux derivative of $J\left( x,u\right) $ with respect
to $x$ we relate critical points to (\ref{AcFun}) with solutions to (\ref%
{emden})-(\ref{emden-bd}) as in \cite{guoSecond}. In fact any critical point
to the action functional (\ref{AcFun}) is a solution to (\ref{emden})-(\ref%
{emden-bd}) and any solution to (\ref{emden})-(\ref{emden-bd}) provides a
critical point to $J$. Hence, in order to find at least one solution (\ref%
{emden})-(\ref{emden-bd}) it suffice to find at least one critical point to (%
\ref{AcFun}). We denote 
\begin{equation*}
V_{u}=\left\{ x\in R^{T}:J\left( x,u\right) =\inf_{v\in R^{T}}J\left(
v,u\right) ,\text{ }\frac{d}{dx}J\left( x,u\right) =0\right\}
\end{equation*}%
for any fixed $u\in L_{M}$. We will employ the following assumptions
concerning the growth of the nonlinear term $f$. \textit{\bigskip }

\textbf{A2} \textit{there exist constants }$\varepsilon _{1}>0$, $%
\varepsilon _{2}\in R$\textit{\ and }$r\in \left( 1,2\right) $\textit{\ such
that } 
\begin{equation}
f\left( k,y,u\right) \leq \varepsilon _{1}\left\vert y\right\vert
^{r-1}+\varepsilon _{2}  \label{lim_sup}
\end{equation}%
\textit{uniformly for }$u\in \left[ -M,M\right] $\textit{, }$k\in \left[ 1,T%
\right] $\textit{\ and }$\left\vert y\right\vert \geq B$\textit{, where }$%
B>0 $\textit{\ is certain (possibly large) constant.\bigskip }

\textbf{A3} \textit{there exist constants }$\varepsilon _{1}>0,\varepsilon
_{2}\in R$\textit{\ and }$r>2$\textit{\ such that} 
\begin{equation}
f\left( k,y,u\right) \geq \varepsilon _{1}\left\vert y\right\vert
^{r-1}+\varepsilon _{2}  \label{lim_inf}
\end{equation}%
\textit{uniformly for }$u\in \left[ -M,M\right] $\textit{, }$k\in \left[ 1,T%
\right] $\textit{\ and }$\left\vert y\right\vert \geq B$\textit{, where }$%
B>0 $\textit{\ is certain (possibly large) constant.\bigskip }

Our main results are as follows.

\begin{theorem}[sublinear case]
\label{TW1}Assume \textbf{A1, A2 }and further that $M+Q$ is either positive
or negative definite matrix. For any fixed $u\in L_{M}$ there exists at
least one non trivial solution $x\in V_{u}$ to problem (\ref{emden})-(\ref%
{emden-bd}). Let $\left\{ u_{n}\right\} _{n=1}^{\infty }\subset L_{M}$ be a
sequence of parameters. For any sequence $\left\{ x_{n}\right\}
_{n=1}^{\infty }$ of solutions $x_{n}\in V_{u_{n}}$ to the problem (\ref%
{emden})-(\ref{emden-bd}) corresponding to $u_{n}$, there exist subsequences 
$\left\{ x_{n_{i}}\right\} _{i=1}^{\infty }\subset R^{T}$, $\left\{
u_{n_{i}}\right\} _{i=1}^{\infty }\subset L_{M}$ and elements $\overline{x}%
\in R^{T}$, $\overline{u}\in L_{M}$ such that $\lim_{i\rightarrow \infty
}x_{n_{i}}=\overline{x}$, $\lim_{n\rightarrow \infty }u_{n_{i}}=\overline{u}$%
. Moreover, $\overline{x}\in V_{\overline{u}}$ (which means that $\overline{x%
}$ satisfies (\ref{emden})-(\ref{emden-bd}) with $\overline{u}$), i.e. 
\begin{equation*}
\Delta \left( p\left( k-1\right) \Delta \overline{x}\left( k-1\right)
\right) +q\left( k\right) \overline{x}\left( k\right) +f\left( k,\overline{x}%
\left( k\right) ,\overline{u}\left( k\right) \right) =g\left( k\right) ,
\end{equation*}%
\begin{equation*}
\overline{x}\left( 0\right) =\overline{x}\left( T\right) \text{, }p\left(
0\right) \Delta \overline{x}\left( 0\right) =p\left( T\right) \Delta 
\overline{x}\left( T\right) \text{.}
\end{equation*}
\end{theorem}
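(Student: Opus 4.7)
The plan is to handle existence for fixed $u \in L_M$ by the direct method of the calculus of variations, then promote this to the continuous-dependence statement by combining uniform coercivity of $J(\cdot, u)$ with compactness of the parameter ball $L_M$ inside the finite-dimensional space $C([1,T], R)$.

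For existence, fix $u \in L_M$ and suppose first that $M+Q$ is positive definite with smallest eigenvalue $\lambda > 0$. Integrating the sublinear bound from A2 and using continuity of $f$ on the compact set $[1,T] \times [-B,B] \times [-M,M]$, one produces constants $C_1, C_2, C_3$ independent of $u \in L_M$ with $F(k,y,u) \leq C_1|y|^r + C_2|y| + C_3$, so that
\begin{equation*}
J(x,u) \geq \tfrac{\lambda}{2}|x|^2 - C_1 \sum_{k=1}^{T}|x(k)|^r - C|x| - \tilde{C}.
\end{equation*}
Since $r<2$, $J(\cdot, u)$ is continuous and coercive on $R^T$, and the direct method yields a minimizer $x_u \in V_u$. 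Any such minimum is automatically a critical point, hence, by the derivative calculation recorded before the statement, solves (\ref{emden})-(\ref{emden-bd}). Nontriviality is forced by A1, since the identically zero function violates (\ref{emden}) at $k_1$ where $g(k_1) \neq 0$. If $M+Q$ is negative definite, I would apply the symmetric argument to $-J(\cdot, u)$, obtaining a maximizer in $V_u$.

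For the dependence part, the crucial observation is that all estimates above are uniform in $u \in L_M$: the constants in A2 do not depend on $u$, and $J(0, u) = -\sum_{k=1}^{T} F(k, 0, u(k))$ stays bounded as $u$ varies over $L_M$ by continuity of $F$. Hence any $x_n \in V_{u_n}$ satisfies $J(x_n, u_n) \leq J(0, u_n) \leq K$, and the uniform coercivity delivers a uniform bound on $|x_n|$. Bolzano-Weierstrass in $R^T$ extracts $x_{n_i} \to \overline{x}$; since $L_M$ is a closed and bounded subset of the finite-dimensional normed space $C([1,T], R)$, a further subsequence yields $u_{n_i} \to \overline{u} \in L_M$.

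To identify $\overline{x}$ as an element of $V_{\overline{u}}$, I would invoke joint continuity of $J$ in $(x,u)$: for any fixed $v \in R^T$,
\begin{equation*}
J(\overline{x}, \overline{u}) = \lim_i J(x_{n_i}, u_{n_i}) = \lim_i \inf_{w \in R^T} J(w, u_{n_i}) \leq \lim_i J(v, u_{n_i}) = J(v, \overline{u}),
\end{equation*}
so $\overline{x}$ minimizes $J(\cdot, \overline{u})$; passing to the limit in $\tfrac{d}{dx} J(x_{n_i}, u_{n_i}) = 0$ then completes the identification $\overline{x} \in V_{\overline{u}}$. The main obstacle is securing the uniform-in-$u$ coercivity: once the sub-level sets of $J(\cdot, u)$ are trapped in a fixed compact of $R^T$ independently of $u \in L_M$, the finite dimensionality of both $R^T$ and $C([1,T], R)$ makes all remaining compactness and continuity arguments routine.
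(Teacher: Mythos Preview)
Your proposal is correct and follows essentially the same route as the paper: uniform coercivity of $J(\cdot,u)$ over $u\in L_M$ from the definiteness of $M+Q$ and the sublinear bound \textbf{A2}, the a~priori bound on minimizers via $J(x_n,u_n)\le J(0,u_n)$, compactness in both $R^T$ and $L_M$, and joint continuity of $J$ to identify the limit as a minimizer (the negative-definite case being handled by passing to $-J$). The only cosmetic differences are that the paper packages the dependence step into a separate abstract principle and argues minimality of $\overline{x}$ by contradiction rather than via your cleaner direct chain $J(\overline{x},\overline{u})=\lim_i J(x_{n_i},u_{n_i})\le\lim_i J(v,u_{n_i})=J(v,\overline{u})$, and it notes that in fact $J(0,u)=0$ exactly since $F(k,0,u)=\int_0^0 f=0$.
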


\begin{theorem}[superlinear case]
\label{TW2}Assume \textbf{A1, A3}. Then the assertion of Theorem \ref{TW1}
is valid.
\end{theorem}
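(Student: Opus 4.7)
The plan is to follow the template of Theorem \ref{TW1}, with the role played there by the definiteness of $M+Q$ and the sublinear growth \textbf{A2} replaced here by the superlinear growth \textbf{A3}. First, I would integrate the inequality in \textbf{A3} with respect to the second variable: for $|y|\ge B$, uniformly in $(k,u)\in [1,T]\times[-M,M]$, this yields a lower bound of the form $F(k,y,u)\ge c_{1}|y|^{r}+c_{2}y+c_{3}$ with $r>2$ and constants independent of $u$. Since $\tfrac{1}{2}\langle (M+Q)x,x\rangle $ grows at most quadratically in $|x|$ and $\sum_{k=1}^{T}g(k)x(k)$ grows at most linearly, the contribution $-\sum_{k=1}^{T}F(k,x(k),u(k))$ eventually dominates and forces $J(\cdot ,u)\to -\infty $ as $|x|\to \infty $, uniformly in $u\in L_{M}$.

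Next, because $R^{T}$ is finite-dimensional and $J(\cdot ,u)$ is continuous, this uniform anti-coercivity produces an extremal critical point (here a global maximum of $J(\cdot ,u)$, which plays in the superlinear regime the role played by the minimum in Theorem \ref{TW1}; the set $V_{u}$ is to be reinterpreted accordingly). By the Euler--Lagrange correspondence recorded in Section 2, this critical point is a solution of (\ref{emden})--(\ref{emden-bd}), and nontriviality follows from \textbf{A1} exactly as before.

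For the dependence-on-parameter part, fix $\{u_{n}\}_{n=1}^{\infty}\subset L_{M}$ and $x_{n}\in V_{u_{n}}$. Compactness of $L_{M}$, viewed as a bounded subset of the finite-dimensional space $C([1,T],R)$ identified with $R^{T}$, yields a subsequence $u_{n_{i}}\to \bar u\in L_{M}$. The uniform estimate from the first step bounds $\{|x_{n_{i}}|\}$: otherwise $J(x_{n_{i}},u_{n_{i}})\to -\infty $, contradicting the extremal property of $x_{n_{i}}$ together with the uniform boundedness of $J(0,u_{n_{i}})$. A further subsequence gives $x_{n_{i}}\to \bar x$, and the continuity of $f$, $p$, $q$, $g$ allows one to pass to the limit in the Euler--Lagrange equations to conclude that $\bar x$ solves (\ref{emden})--(\ref{emden-bd}) with parameter $\bar u$.

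The hard part is to verify that $\bar x$ actually lies in $V_{\bar u}$, rather than being merely some critical point of $J(\cdot ,\bar u)$. Here one would exploit the joint continuity of $(x,u)\mapsto J(x,u)$: for every $v\in R^{T}$, $J(v,u_{n_{i}})\to J(v,\bar u)$, while the extremal property of $x_{n_{i}}$ supplies the corresponding inequality between $J(x_{n_{i}},u_{n_{i}})$ and $J(v,u_{n_{i}})$; taking the limit preserves this inequality and yields $\bar x\in V_{\bar u}$. This variational passage to the limit, rather than the existence assertion itself, is the substance of the theorem.
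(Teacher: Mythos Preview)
Your proposal is correct and follows essentially the same strategy as the paper: uniform anti-coercivity of $J(\cdot,u)$ from \textbf{A3} (cf.\ Lemma~\ref{exist_superlinear}), existence of an extremal critical point, uniform bounds on the extremizers via $J(0,u)$, extraction of convergent subsequences by finite-dimensional compactness, and passage to the limit to identify $\bar x$ as an extremizer for $\bar u$. The paper packages the last three steps by multiplying $J$ by $-1$ and invoking the general Theorem~\ref{ogolna_stabilnosc}, whereas you redo that argument inline for maxima; your direct limit of the inequality $J(x_{n_i},u_{n_i})\ge J(v,u_{n_i})$ is in fact a bit cleaner than the paper's contradiction argument in Theorem~\ref{ogolna_stabilnosc}, but the content is the same.
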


We note that in Theorem \ref{TW2} matrix $M+Q$ could be singular and its
definiteness is not important.\textit{\bigskip }

In order to consider the case when $r=2$ we first introduce some necessary
notation. In case when $M+Q$ is positive definite there exists a number $%
a_{M+Q}>0$ such that for all $y\in R^{T}$ 
\begin{equation}
\left\langle \left( M+Q\right) y,y\right\rangle \geq a_{M+Q}\left\vert
y\right\vert ^{2},  \label{pos_definite}
\end{equation}%
while in case when $M+Q$ is negative definite there exists a number $%
b_{M+Q}>0$ such that for all $y\in R^{T}$ 
\begin{equation}
\left\langle \left( M+Q\right) y,y\right\rangle \leq -b_{M+Q}\left\vert
y\right\vert ^{2}  \label{neg_definite}
\end{equation}%
Now we may formulate the assumptions in case $r=2$.\textit{\bigskip }

\textbf{A4} \textit{let }$M+Q$\textit{\ be positive definite and let there
exist constants }$\varepsilon _{1}\in \left( 0,2a_{M+Q}\right) $, $%
\varepsilon _{2}\in R$\textit{\ such that } 
\begin{equation}
f\left( k,y,u\right) \leq \varepsilon _{1}\left\vert y\right\vert
+\varepsilon _{2}  \label{r_1_pos}
\end{equation}%
\textit{uniformly for }$u\in \left[ -M,M\right] $\textit{, }$k\in \left[ 1,T%
\right] $\textit{\ and }$\left\vert y\right\vert \geq B$\textit{, where }$%
B>0 $\textit{\ is certain (possibly large) constant.\bigskip }

\textbf{A5} \textit{let }$M+Q$\textit{\ be negative definite and let there
exist constants }$\varepsilon _{1}\in \left( 0,2b_{M+Q}\right) $, $%
\varepsilon _{2}\in R$ \textit{such that (\ref{r_1_pos}) } \textit{holds
uniformly for }$u\in \left[ -M,M\right] $\textit{, }$k\in \left[ 1,T\right] $%
\textit{\ and }$\left\vert y\right\vert \geq B$\textit{, where }$B>0$\textit{%
\ is certain (possibly large) constant.}\bigskip

\begin{theorem}[$r=2$]
\label{tw_r=2}Assume either \textbf{A1,} \textbf{A4} or \textbf{A1,} \textbf{%
A5}. Then the assertion of Theorem \ref{TW1} is valid.
\end{theorem}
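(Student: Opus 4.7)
The plan is to follow the three-stage template of the proof of Theorem \ref{TW1}: (i) uniformly coercivize (or anti-coercivize) the action functional $J(\cdot,u)$ in $x$; (ii) extract a convergent subsequence from $\{(x_{n},u_{n})\}$ via a uniform a priori bound; (iii) pass to the limit both in the Euler--Lagrange equation and in the extremizing property. Only step (i) needs genuinely new work, since in the critical case $r=2$ the quadratic part of $J$ and the nonlinearity scale identically in $|x|$.

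For step (i) under \textbf{A4} I would integrate the one-sided bound $f(k,y,u)\le \varepsilon_{1}|y|+\varepsilon_{2}$ (valid for $|y|\ge B$), using continuity of $f$ on the compact set $[1,T]\times[-B,B]\times[-M,M]$ to absorb the contribution at small $|y|$ into an additive constant; this produces an estimate $F(k,x(k),u(k))\le \tfrac{\varepsilon_{1}}{2}x(k)^{2}+C_{1}|x(k)|+C_{2}$ with $C_{1},C_{2}$ independent of $u\in L_{M}$. Summing in $k$ and combining with (\ref{pos_definite}) and the linear $g$-term yields $J(x,u)\ge \alpha|x|^{2}-\beta|x|-\gamma$, where $\alpha>0$ by the strict inequality $\varepsilon_{1}<2a_{M+Q}$ in \textbf{A4}, and where $\alpha,\beta,\gamma$ are independent of $u\in L_{M}$. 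Coercivity of $J(\cdot,u)$ together with its continuity on the finite-dimensional space $R^{T}$ delivers a global minimizer $x\in V_{u}$; nontriviality follows from $g(k_{1})\ne 0$ in \textbf{A1} exactly as in Theorem \ref{TW1}. Case \textbf{A5} is symmetric: one shows $J(\cdot,u)$ is anti-coercive, and a global maximizer plays the role of the minimizer, $V_{u}$ being interpreted accordingly.

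For steps (ii)--(iii) the decisive feature of the coercivity estimate is that the a priori bound $|x|\le R$ it produces is \emph{uniform} in $u\in L_{M}$. Given $\{u_{n}\}\subset L_{M}$ and $x_{n}\in V_{u_{n}}$, Bolzano--Weierstrass in $R^{T}$ extracts $x_{n_{i}}\to \overline{x}$, and compactness of $L_{M}$ in $C([1,T],R)\cong R^{T}$ extracts $u_{n_{i}}\to \overline{u}\in L_{M}$. Joint continuity of $\tfrac{d}{dx}J$ in $(x,u)$, combined with $\tfrac{d}{dx}J(x_{n_{i}},u_{n_{i}})=0$, passes to $\tfrac{d}{dx}J(\overline{x},\overline{u})=0$, so $\overline{x}$ satisfies (\ref{emden})--(\ref{emden-bd}) with parameter $\overline{u}$. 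To upgrade this critical point to a member of $V_{\overline{u}}$ I would compare against an arbitrary test vector: $J(x_{n_{i}},u_{n_{i}})\le J(y,u_{n_{i}})$ for every $y\in R^{T}$, and joint continuity of $J$ in $(x,u)$ gives $J(\overline{x},\overline{u})\le J(y,\overline{u})$, whence $\overline{x}\in V_{\overline{u}}$. The only point I expect to require real care is verifying that $\alpha$ above is genuinely independent of $u$; this rests on the ``uniformly for $u\in[-M,M]$'' clause in \textbf{A4}/\textbf{A5}, without which the radius $R$ could blow up along $\{u_{n}\}$ and the extraction in step (ii) would fail.
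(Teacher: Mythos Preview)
Your proposal is correct and follows the same three-stage architecture as the paper (uniform coercivity, uniform a priori bound and compactness, passage to the limit). The coercivity step~(i) is precisely what the paper records as Lemma~\ref{exist_r=2}, which simply re-reads the estimate (\ref{r_1_coerc_pos}) (resp.\ (\ref{r_1_neg_coerc})) with $r=2$ and the sharpened constant $\varepsilon_{1}<2a_{M+Q}$ (resp.\ $\varepsilon_{1}<2b_{M+Q}$).

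Where you diverge is in step~(iii). The paper does not pass to the limit directly; instead it routes everything through the abstract stability principle Theorem~\ref{ogolna_stabilnosc}, whose proof establishes $J(\overline{x},\overline{u})=\inf_{y}J(y,\overline{u})$ by contradiction (assuming a strictly smaller value $J(x_{0},\overline{u})$ exists and decomposing the gap via (\ref{granice})), and only then invokes Fermat's rule for the critical-point equation. Your argument is more streamlined: you obtain $\tfrac{d}{dx}J(\overline{x},\overline{u})=0$ immediately from continuity of $\tfrac{d}{dx}J$ on $R^{T}\times L_{M}$, and you obtain minimality by the one-line comparison $J(x_{n_{i}},u_{n_{i}})\le J(y,u_{n_{i}})\to J(\overline{x},\overline{u})\le J(y,\overline{u})$. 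Both are valid; the paper's detour buys a reusable abstract theorem (applied again in Section~6), while your direct route is shorter for this specific problem. One small point: under \textbf{A5} the functional is anti-coercive and $V_{u}$ as literally defined (via $\inf$) need not be the right object; the paper handles this by replacing $J$ with $-J$, which is what your phrase ``interpreted accordingly'' should be taken to mean.
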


\section{Auxiliary results}

We will prove the following lemmas concerning the existence (\ref{emden})-(%
\ref{emden-bd}) with fixed $u\in L_{M}$. These lemmas improve certain
existence results from \cite{HeWe}. In contrast to the boundary values for
ODE, compare with \cite{Ma}, it is the superlinear case which is easier to
be dealt with, while the sublinear one involves more restrictive assumptions.

\begin{lemma}
\label{exist_sublinear}Assume \textbf{A1, A2 }and that $M+Q$ is either
positive or negative definite matrix. Then for any fixed $u\in L_{M}$ there
exists at least one non trivial solution $x\in V_{u}$ to problem (\ref{emden}%
)-(\ref{emden-bd}).
\end{lemma}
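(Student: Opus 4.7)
The plan is the direct method of the calculus of variations applied to the action functional $J(\cdot,u)$ from (\ref{AcFun}). Since $R^T$ is finite dimensional and $J(\cdot,u)$ is continuous, once I show that $J(\cdot,u)$ is coercive (when $M+Q$ is positive definite) or anti-coercive (when $M+Q$ is negative definite), an extremum is attained on $R^T$; it is automatically a critical point since $R^T$ is open, and by the correspondence already established between critical points of $J(\cdot,u)$ and solutions of (\ref{emden})-(\ref{emden-bd}), it solves the boundary value problem. Nontriviality is then the observation already made after \textbf{A1}: a function vanishing on $[1,T]$ cannot solve (\ref{emden}), because at $k_1$ this would force $0=g(k_1)\neq 0$.

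The first technical step is to convert \textbf{A2} into a growth estimate for the primitive $F(k,\cdot,u)$ of $f(k,\cdot,u)$. Splitting the integration at $\pm B$, the bounded piece is controlled uniformly in $(k,u)\in[1,T]\times[-M,M]$ by continuity on the compact window $[-B,B]$, while on $\{|s|\geq B\}$ the estimate $f(k,s,u)\leq\varepsilon_1|s|^{r-1}+\varepsilon_2$ integrates to
$$|F(k,y,u)|\leq\frac{\varepsilon_1}{r}|y|^r+c_1|y|+c_2$$
uniformly in $k$ and $u$, after reading \textbf{A2} in its two-sided sublinear form.

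The second step is the coercivity estimate itself. In the positive definite case I combine the previous inequality with (\ref{pos_definite}) and the Cauchy--Schwarz bound on $\sum g(k)x(k)$ to obtain
$$J(x,u)\geq\frac{a_{M+Q}}{2}|x|^2-C_1|x|^r-C_2|x|-C_3,$$
which tends to $+\infty$ as $|x|\to\infty$ because $r\in(1,2)$. Continuity plus coercivity deliver a minimizer $\bar x\in R^T$ with $\frac{d}{dx}J(\bar x,u)=0$, placing $\bar x\in V_u$. The negative definite case is the mirror image: (\ref{neg_definite}) yields $J(x,u)\to -\infty$, so $-J(\cdot,u)$ is coercive, its minimizer is a maximizer of $J(\cdot,u)$ and hence a critical point, and one interprets $V_u$ with $\sup$ in place of $\inf$ in that anti-coercive regime.

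The step I expect to be the most delicate is extracting a two-sided bound on $F$ from the one-sided hypothesis \textbf{A2}: as stated, $f(k,y,u)\leq\varepsilon_1|y|^{r-1}+\varepsilon_2$ controls $F$ from above for $y\geq B$ and from below for $y\leq -B$, but not symmetrically. I would read \textbf{A2} in its standard sublinear form, equivalent to an estimate on $|f|$, which is the only reading under which both branches of the positive/negative definite dichotomy work in parallel. With that interpretation in hand, everything else is routine finite-dimensional direct-method machinery.
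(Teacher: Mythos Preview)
Your proposal is correct and follows essentially the same route as the paper: the direct method via coercivity (respectively anti-coercivity) of $J(\cdot,u)$, obtained by combining the definiteness bound on the quadratic part with a sublinear growth estimate on $F$, then invoking finite-dimensionality and continuity to produce an extremizer that is a critical point. Your remark that \textbf{A2}, read literally as a one-sided inequality, does not by itself yield the two-sided control on $F$ needed for both the positive and negative definite branches is well taken; the paper tacitly uses the two-sided (sublinear) reading in its estimates (\ref{r_1_coerc_pos}) and (\ref{r_1_neg_coerc}) without comment, and your explicit handling of this point (and of the compact window $|s|\le B$) is a genuine improvement in rigor over the paper's argument.
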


\begin{proof}
Let us fix $u\in L_{M}.$ We see that $x\rightarrow J\left( x,u\right) $ is
continuous and differentiable the sense of G\^{a}teaux on $R^{T}$ in either
case. Thus we would have the assertion provided that $x\rightarrow J\left(
x,u\right) $ is coercive or anti-coercive since functional $x\rightarrow
J\left( x,u\right) $ would have either an argument of a minimum or an
argument of a maximum which must be its critical point in turn.$\bigskip $

Let $M+Q$ be a positive definite matrix. Let us take sufficiently large $B>0$
as in \textbf{A2}. We denote $\left\vert \overline{g}\right\vert =\sqrt{%
\sum_{k=1}^{T}g^{2}\left( k\right) }$ and observe that for $%
c_{1}=T^{1/\left( 1-\frac{2}{r}\right) }$ it follows by H\"{o}lder's
inequality 
\begin{equation}
\begin{array}{l}
\sum_{k=1}^{T}y\left( k\right) \leq \sqrt{\sum_{k=1}^{T}\left\vert y\left(
k\right) \right\vert ^{2}}\sqrt{\sum_{k=1}^{T}1}=\sqrt{T}\left\vert
y\right\vert ,\bigskip \\ 
\sum_{k=1}^{T}y\left( k\right) g\left( k\right) \leq \left\vert \overline{g}%
\right\vert \left\vert y\right\vert \bigskip \\ 
\sum_{k=1}^{T}\left\vert y\left( k\right) \right\vert ^{r}\leq \sqrt[2/r]{%
\sum_{k=1}^{T}\left\vert y\left( k\right) \right\vert ^{r\cdot \frac{2}{r}}}%
\sqrt[\left( 1-\frac{2}{r}\right) ]{\sum_{k=1}^{T}1}=c_{1}\left\vert
y\right\vert ^{r}.%
\end{array}
\label{hold_ineq2}
\end{equation}%
Now by (\ref{pos_definite}), (\ref{lim_sup}), (\ref{hold_ineq2}) we have for
all $\left\vert y\right\vert \geq B$%
\begin{equation}
\begin{array}{l}
J\left( x,u\right) \geq a_{M+Q}\left\vert y\right\vert ^{2}-\frac{%
\varepsilon _{1}}{r}\sum_{k=1}^{T}\left\vert y\left( k\right) \right\vert
^{r}-\varepsilon _{2}\sum_{k=1}^{T}\left\vert y\left( k\right) \right\vert
-\left\vert y\right\vert \left\vert \overline{g}\right\vert \geq \bigskip \\ 
a_{M+Q}\left\vert y\right\vert ^{2}-\frac{\varepsilon _{1}}{r}%
c_{1}\left\vert y\right\vert ^{r}-\varepsilon _{2}\sqrt{T}\left\vert
y\right\vert -\left\vert y\right\vert \left\vert \overline{g}\right\vert 
\underset{\left\vert y\right\vert \rightarrow \infty }{\rightarrow }\infty .%
\end{array}
\label{r_1_coerc_pos}
\end{equation}

Now let $M+Q$ be a negative definite matrix. Again, let us take sufficiently
large $B>0$ as in \textbf{A2}. By (\ref{neg_definite}), (\ref{lim_sup}), (%
\ref{hold_ineq2}) we have for all $\left\vert y\right\vert \geq B$%
\begin{equation}
J\left( x,u\right) \leq -b_{M+Q}\left\vert y\right\vert ^{2}+\frac{%
\varepsilon _{1}}{r}c_{1}\left\vert y\right\vert ^{r}+\varepsilon _{2}\sqrt{T%
}\left\vert y\right\vert +\left\vert y\right\vert \left\vert \overline{g}%
\right\vert \underset{\left\vert y\right\vert \rightarrow \infty }{%
\rightarrow }-\infty .  \label{r_1_neg_coerc}
\end{equation}
\end{proof}

\begin{lemma}
\label{exist_superlinear}Assume \textbf{A1, A3}. Then for any fixed $u\in
L_{M}$ there exists at least one non trivial solution $x\in V_{u}$ to
problem (\ref{emden})-(\ref{emden-bd}).
\end{lemma}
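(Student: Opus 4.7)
The approach is to mirror the proof of Lemma \ref{exist_sublinear}, with coercivity of the action functional replaced by anti-coercivity. Fix $u \in L_M$; the functional $x \mapsto J(x, u)$ is continuous and G\^ateaux differentiable on $R^T$. It therefore suffices to show that $J(\cdot, u)$ is anti-coercive, i.e.\ $J(x, u) \to -\infty$ as $|x| \to \infty$. Continuity together with anti-coercivity on the finite-dimensional space $R^T$ yield a global maximizer $\overline{x} \in R^T$; G\^ateaux differentiability then forces $\overline{x}$ to be a critical point of $J(\cdot, u)$, and the variational correspondence between critical points of $J(\cdot, u)$ and solutions of (\ref{emden})-(\ref{emden-bd}), established earlier in the paper, yields a nontrivial solution (nontriviality being immediate from A1 since $g(k_1) \neq 0$).

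The main estimate is obtained by integrating A3: for $|y| \geq B$ I would derive
\begin{equation*}
F(k, y, u) \geq \frac{\varepsilon_1}{r} |y|^r + \varepsilon_2 y - C_0,
\end{equation*}
uniformly in $k \in [1, T]$ and $u \in [-M, M]$, where $C_0$ depends only on $B$ and on the uniform bound of $|f|$ over the compact cube $[1, T] \times [-B, B] \times [-M, M]$ guaranteed by A1. Summing over $k$, invoking the finite-dimensional power-mean inequality $\sum_{k=1}^T |y(k)|^r \geq T^{1 - r/2} |y|^r$ (valid for $r > 2$), the elementary bound $\tfrac{1}{2}\langle (M+Q) x, x\rangle \leq \tfrac{1}{2}\|M+Q\||x|^2$, and the Cauchy-Schwarz estimates already displayed in (\ref{hold_ineq2}), I arrive at
\begin{equation*}
J(x, u) \leq \tfrac{1}{2}\|M+Q\||x|^2 - \frac{\varepsilon_1}{r} T^{1 - r/2} |x|^r + (\sqrt{T}|\varepsilon_2| + |\overline{g}|)|x| + C_1.
\end{equation*}
Since $r > 2$, the $|x|^r$ term dominates and the right-hand side tends to $-\infty$ as $|x| \to \infty$, which is anti-coercivity.

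The main obstacle is the derivation of the lower bound on $F$ just claimed, because A3 provides only a one-sided pointwise bound on $f$. For $y < -B$ the direction of integration in $F(k, y, u) = -\int_y^0 f(k, s, u)\, ds$ must be reversed, and signs must be tracked carefully in order to turn the pointwise lower bound on $f$ into the stated lower bound on $F$; the bounded contribution from $s \in [-B, 0]$ has to be absorbed into $C_0$ using continuity of $f$ from A1. Once that estimate is in hand, anti-coercivity and the subsequent extraction of a critical point are immediate and exactly dual to the sublinear case already treated in Lemma \ref{exist_sublinear}.
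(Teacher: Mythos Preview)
Your approach is exactly the paper's: bound the quadratic form above via $\langle (M+Q)x,x\rangle\le \|M+Q\|\,|x|^2$, integrate A3 to a lower bound on $F$, and conclude anti-coercivity of $J(\cdot,u)$, whence a global maximum and a critical point as in Lemma~\ref{exist_sublinear}. You are in fact more careful than the paper on two points it passes over in silence: the correct direction of the $\ell^r$--$\ell^2$ comparison for $r>2$ (your inequality $\sum_k|y(k)|^r\ge T^{1-r/2}|y|^r$, rather than the paper's reuse of the constant $c_1$ from the sublinear case), and the need to treat the integration for negative arguments separately.

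That said, the obstacle you flag for $y<-B$ is not resolved by careful sign-tracking, and your claimed lower bound on $F$ fails there. For $y<-B$ one has $F(k,y,u)=-\int_y^0 f(k,s,u)\,ds$, and the one-sided hypothesis $f(k,s,u)\ge \varepsilon_1|s|^{r-1}+\varepsilon_2$ on $s\in[y,-B]$ yields only an \emph{upper} bound
\[
F(k,y,u)\;\le\; -\tfrac{\varepsilon_1}{r}|y|^r + O(|y|),
\]
the opposite of what you assert. Hence $-\sum_k F$ is driven to $+\infty$ along directions with large negative components, and $J(\cdot,u)$ is not anti-coercive under A3 as literally written. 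The paper's one-line estimate shares this defect; both arguments become correct if A3 is read as a sign condition of Ambrosetti--Rabinowitz type, e.g.\ $y\,f(k,y,u)\ge \varepsilon_1|y|^{r}+\varepsilon_2|y|$ for $|y|\ge B$, or equivalently if the stated bound on $f$ is understood for $y\ge B$ together with the mirrored bound $f(k,y,u)\le -\varepsilon_1|y|^{r-1}-\varepsilon_2$ for $y\le -B$. With that reading your proof is complete and matches the paper's.
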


\begin{proof}
Fix $u\in L_{M}$. We see that for all $y\in R^{T}$ 
\begin{equation}
\left\langle \left( M+Q\right) y,y\right\rangle \leq \left\Vert
M+Q\right\Vert \left\vert y\right\vert ^{2},  \label{MATRIX_norm}
\end{equation}%
where $\left\Vert M+Q\right\Vert $ denotes the norm of a matrix $M+Q$.
Further by (\ref{lim_inf}) we have for all $\left\vert y\right\vert \geq B$,
where $B$ is as in \textbf{A3} 
\begin{equation*}
J\left( x,u\right) \leq \left\Vert M+Q\right\Vert \left\vert y\right\vert
^{2}-\frac{\varepsilon _{1}}{r}c_{1}\left\vert y\right\vert ^{r}-\varepsilon
_{2}\sqrt{T}\left\vert y\right\vert -\left\vert y\right\vert \left\vert 
\overline{g}\right\vert \underset{\left\vert y\right\vert \rightarrow \infty 
}{\rightarrow }-\infty .
\end{equation*}%
Hence the assertion follows with the same arguments as in Lemma \ref%
{exist_sublinear}.
\end{proof}

\begin{lemma}
\label{exist_r=2}Assume\textbf{A1} and either \textbf{A4} or \textbf{A5}.
Then for any fixed $u\in L_{M}$ there exists at least one non trivial
solution $x\in V_{u}$ to problem (\ref{emden})-(\ref{emden-bd}).
\end{lemma}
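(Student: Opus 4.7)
The plan is to mirror the proofs of Lemmas \ref{exist_sublinear} and \ref{exist_superlinear}. Fix $u\in L_{M}$. Since $x\mapsto J(x,u)$ is continuous and G\^{a}teaux differentiable on $R^{T}$, it suffices to establish coercivity of $J(\cdot ,u)$ in case A4, and anti-coercivity in case A5; a global minimizer (respectively maximizer) will then exist on $R^{T}$, be automatically a critical point of $J$, and therefore yield a non-trivial solution of (\ref{emden})--(\ref{emden-bd}).

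The first step will be to convert the pointwise growth estimate (\ref{r_1_pos}) on $f$ into a quadratic estimate on its primitive $F$. Integrating (\ref{r_1_pos}) in the second variable, and using the joint continuity of $F$ on the compact set $\{|y|\leq B\}\times [1,T]\times [-M,M]$ to control the initial values, yields a uniform upper bound of the form
\[
F(k,y,u) \leq \frac{\varepsilon_{1}}{2}\left\vert y\right\vert ^{2} + \varepsilon_{2}\left\vert y\right\vert + C_{0}
\]
valid for every $y\in R$, $k\in [1,T]$ and $u\in [-M,M]$, where $C_{0}$ merely absorbs the behaviour of $F$ on $|y|\leq B$. Under A5 the symmetric lower bound holds.

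Under A4 one then combines this estimate with (\ref{pos_definite}) and the H\"{o}lder-type inequalities (\ref{hold_ineq2}) in exactly the same fashion as in (\ref{r_1_coerc_pos}) to obtain, for all sufficiently large $|x|$,
\[
J(x,u) \geq \left( a_{M+Q} - \frac{\varepsilon_{1}}{2}\right) \left\vert x\right\vert ^{2} - \varepsilon_{2}\sqrt{T}\left\vert x\right\vert - \left\vert x\right\vert \left\vert \overline{g}\right\vert - TC_{0}.
\]
The assumption $\varepsilon_{1}\in (0,2a_{M+Q})$ is precisely what forces the leading coefficient of $\left\vert x\right\vert ^{2}$ to be strictly positive, so $J(x,u)\to +\infty$ as $|x|\to\infty$ and a global minimizer exists. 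Under A5 the computation is entirely symmetric: (\ref{neg_definite}) together with the lower bound on $F$ derived from (\ref{r_1_pos}) gives $J(x,u)\leq -\left( b_{M+Q}-\varepsilon_{1}/2\right) \left\vert x\right\vert ^{2}+O(\left\vert x\right\vert )\to -\infty $, which produces a global maximizer.

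The only genuinely new ingredient compared with Lemmas \ref{exist_sublinear} and \ref{exist_superlinear} is that for $r=2$ the quadratic form term and the primitive of $f$ both contribute at the common scale $\left\vert x\right\vert ^{2}$, so coercivity is a near miss rather than a comfortable margin. The entire argument therefore hinges on the careful bookkeeping of the leading constant, and this is exactly the point I expect to be delicate; the quantitative hypotheses $\varepsilon_{1}<2a_{M+Q}$ in A4 and $\varepsilon_{1}<2b_{M+Q}$ in A5 are designed precisely so that this cancellation comes out with the correct sign.
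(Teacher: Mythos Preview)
Your approach is essentially the same as the paper's: the paper's proof simply points back to the estimates (\ref{r_1_coerc_pos}) and (\ref{r_1_neg_coerc}) with $r=2$, observing that under \textbf{A4} one obtains coercivity and under \textbf{A5} anti-coercivity, and then concludes as in the earlier lemmas. You have filled in more detail---the integration of (\ref{r_1_pos}) to control $F$, the explicit identification of the leading coefficient $a_{M+Q}-\varepsilon_{1}/2$, and the remark that the hypotheses $\varepsilon_{1}<2a_{M+Q}$ (resp.\ $\varepsilon_{1}<2b_{M+Q}$) are exactly what is needed to keep this coefficient positive---but the underlying argument is identical.
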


\begin{proof}
Fix $u\in L_{M}$. With \textbf{A4 }we see by (\ref{r_1_coerc_pos}) that $%
x\rightarrow J\left( x,u\right) $ is coercive while with \textbf{A5 }we see
by (\ref{r_1_neg_coerc}) it is anti-coercive. Hence the assertion follows
with the same arguments as in the above lemmas.
\end{proof}

\section{Dependence on parameters}

In order to derive the results concerning the dependence on parameters for
problem (\ref{emden})-(\ref{emden-bd}), we employ the following general
principle which we could further apply in a finite dimensional
setting.\medskip

Let $E$ be finite dimensional Euclidean space with inner product $%
\left\langle \cdot ,\cdot \right\rangle $ and with norm $\left\Vert \cdot
\right\Vert .$ Let $C$ be a finite dimensional complete normed space with
norm $\left\Vert \cdot \right\Vert _{C}$. Let us consider a family of action
functionals $x\rightarrow J\left( x,u\right) $, where $x\in E$ and where $%
u\in C$ is a parameter.

\begin{theorem}
\label{ogolna_stabilnosc}Assume that \textit{there exist constants }$\mu
,a,\beta >0$\textit{, }$b\in R$\textit{\ such that } 
\begin{equation}
J\left( x,u\right) \geq a\left\Vert x\right\Vert ^{\mu }+b\text{ for all}%
\mathit{\ }x\in E\mathit{\ }\text{with }\left\Vert x\right\Vert \geq \beta 
\text{\ \textit{and all}}\mathit{\ }u\in C\mathit{.}  \label{a_b}
\end{equation}%
\textit{\ Assume also that }$x\rightarrow J\left( x,u\right) $ continuous
and differentiable in the sense of G\^{a}teaux in the first variable for any 
$u\in C$. Then for any $u\in C$ there exists at least one solution $x_{u}$
to problem 
\begin{equation}
\frac{d}{dx}J\left( x,u\right) =0.  \label{rowD}
\end{equation}%
Assume further that there exists a constant $\alpha >0$ such that 
\begin{equation}
J\left( 0,u\right) \leq \alpha \text{ for all }u\in E.  \label{alfa}
\end{equation}%
Let $\left\{ u_{n}\right\} _{n=1}^{\infty }\subset C$ be a convergent
sequence of parameters, where $\lim_{n\rightarrow \infty }u_{n}=\overline{u}%
\in C$ and let us assume that either $J$ is continuous on $E\times C$ or the
G\^{a}teaux derivative of $J$ with respect to $x$, $\frac{d}{dx}J\left(
x,u\right) $, is bounded on bounded sets in $E\times C$ and $J$ is
continuous with respect to $u$ on $C$ for any $x\in E$. Then for any
sequence $\left\{ x_{n}\right\} _{n=1}^{\infty }$ of solutions $x_{n}\in E$
to the problem (\ref{rowD}) corresponding to $u_{n}$ for $n=1,2,...$ there
exist a subsequence $\left\{ x_{n_{i}}\right\} _{i=1}^{\infty }\subset E$
and an element $\overline{x}\in E$ such that $\lim_{i\rightarrow \infty
}x_{n_{i}}=\overline{x}$ and 
\begin{equation*}
J\left( \overline{x},\overline{u}\right) =\inf_{y\in E}J\left( y,\overline{u}%
\right) .
\end{equation*}
Moreover, $\overline{x}$ satisfies (\ref{rowD}) with $u=\overline{u}$, i.e. 
\begin{equation}
\frac{d}{dx}J\left( \overline{x},\overline{u}\right) =0.  \label{RowD_KRESKA}
\end{equation}
\end{theorem}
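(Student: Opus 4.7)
My plan is to follow the classical direct method in the calculus of variations, adapted to exploit the finite dimensionality of $E$ and the two alternative continuity hypotheses. The proof naturally splits into four steps: existence, a uniform a priori bound on minimizing solutions, extraction of a convergent subsequence, and identification of the limit as a minimizer/critical point for the limit parameter.

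First, I would establish existence of $x_u$ for each fixed $u\in C$. The function $x\mapsto J(x,u)$ is continuous on the finite dimensional space $E$ and, by \eqref{a_b}, coercive; hence it attains its infimum at some point $x_u$. Since $J(\cdot,u)$ is G\^ateaux differentiable, this minimum is a critical point, i.e.\ $\frac{d}{dx}J(x_u,u)=0$. The key point for the dependence part is that in the statement the solutions $x_n$ should be understood as such minimizing critical points (these are exactly the elements of the sets $V_u$ used in the applications), so from now on I may assume $J(x_n,u_n)=\inf_{y\in E}J(y,u_n)$.

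Second, I would derive a uniform bound on $\{x_n\}$. By minimality and by \eqref{alfa}, $J(x_n,u_n)\le J(0,u_n)\le\alpha$ for every $n$. Applying \eqref{a_b}, whenever $\|x_n\|\ge\beta$ we get $a\|x_n\|^{\mu}+b\le\alpha$, whence $\|x_n\|\le\max\bigl\{\beta,\bigl((\alpha-b)/a\bigr)^{1/\mu}\bigr\}$. Since $E$ is finite dimensional, $\{x_n\}$ is then relatively compact; pick a subsequence $x_{n_i}\to\overline{x}$ in $E$.

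Third, I would show that $\overline{x}$ minimizes $J(\cdot,\overline{u})$. Fix an arbitrary $y\in E$; by minimality, $J(x_{n_i},u_{n_i})\le J(y,u_{n_i})$. Under the first continuity alternative (joint continuity of $J$ on $E\times C$) I pass to the limit directly: the left-hand side tends to $J(\overline{x},\overline{u})$ and the right-hand side to $J(y,\overline{u})$. Under the second alternative I would decompose
\begin{equation*}
J(x_{n_i},u_{n_i})-J(\overline{x},\overline{u})=\bigl[J(x_{n_i},u_{n_i})-J(\overline{x},u_{n_i})\bigr]+\bigl[J(\overline{x},u_{n_i})-J(\overline{x},\overline{u})\bigr];
\end{equation*}
the second bracket vanishes by continuity of $J(\overline{x},\cdot)$, and for the first bracket I would apply the mean value theorem along the segment joining $\overline{x}$ and $x_{n_i}$, writing the difference as $\bigl\langle\tfrac{d}{dx}J(\xi_i,u_{n_i}),x_{n_i}-\overline{x}\bigr\rangle$ for some $\xi_i$ on that segment. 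Since $\{x_{n_i}\}$ and $\{u_{n_i}\}$ are bounded, the $\xi_i$ lie in a bounded set, so the derivatives are bounded by hypothesis, while $\|x_{n_i}-\overline{x}\|\to 0$; the first bracket therefore also vanishes. Similarly, $J(y,u_{n_i})\to J(y,\overline{u})$. In either case $J(\overline{x},\overline{u})\le J(y,\overline{u})$, and since $y$ was arbitrary, $\overline{x}$ is a global minimizer of $J(\cdot,\overline{u})$. Finally, G\^ateaux differentiability at the minimizer yields \eqref{RowD_KRESKA}.

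The main obstacle is precisely the passage to the limit in $J(x_{n_i},u_{n_i})$ under the weaker second alternative, because there $J$ is not assumed jointly continuous; the trick is that the finite dimensional setting together with the boundedness of $\frac{d}{dx}J$ on bounded sets effectively supplies the missing uniform continuity via the mean value estimate described above. Everything else reduces to routine finite dimensional compactness and the standard first-order condition at a minimum.
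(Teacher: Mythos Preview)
Your proof is correct and follows essentially the same approach as the paper: the same uniform bound $a\|x_n\|^{\mu}+b\le J(x_n,u_n)\le J(0,u_n)\le\alpha$ via minimality, the same subsequence extraction by finite dimensionality, and the same mean value estimate on the segment to handle the weaker continuity alternative. The only cosmetic difference is that the paper argues by contradiction (assuming a strictly smaller minimizer $x_0$ for $J(\cdot,\overline{u})$ exists and deriving $\delta\le 0$), whereas you pass to the limit directly in $J(x_{n_i},u_{n_i})\le J(y,u_{n_i})$ for arbitrary $y$; your version is slightly cleaner but uses the identical ingredients.
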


\begin{proof}
Let us fix $u\in C$. Assumption (\ref{a_b}) leads to the coercivity of $%
x\rightarrow J\left( x,u\right) $. Since $x\rightarrow J\left( x,u\right) $
is also continuous and since $E$ is finite dimensional it follows that $J$
has an argument of a minimum $x_{u}$\ which satisfies (\ref{rowD}). Next,
let us take a sequence $\left\{ u_{n}\right\} _{n=1}^{\infty }\subset X$
converging to some $\overline{u}\in X$ and let $\left\{ x_{n}\right\}
_{n=1}^{\infty }$ be a sequence of solutions to (\ref{rowD}) corresponding
to the relevant elements of the sequence $\left\{ u_{n}\right\}
_{n=1}^{\infty }$. By (\ref{a_b}) and by (\ref{alfa}) we see that for all$%
\mathit{\ }n\in N$ we have 
\begin{equation*}
a\left\Vert x_{n}\right\Vert ^{\mu }+b\leq J\left( x_{n},u_{n}\right) \leq
\alpha .
\end{equation*}%
Hence a sequence $\left\{ x_{n}\right\} _{n=1}^{\infty }$ is norm bounded,
say by some $c>0$, and again since $E$ is finite dimensional, it contains
the convergent subsequence, $\left\{ x_{n_{i}}\right\} _{i=1}^{\infty }$,
such that $\lim_{i\rightarrow \infty }x_{n_{i}}=\overline{x}$, where $%
\overline{x}\in E$.$\bigskip $

We will prove that (\ref{RowD_KRESKA}) holds. We observe that there exists $%
x_{0}\in E$ such that $\frac{d}{dx}J\left( x_{0},\overline{u}\right) =0$ and 
$J\left( x_{0},\overline{u}\right) =\inf_{y\in E}J\left( y,\overline{u}%
\right) $. We see that there are two possibilities: either $J\left( x_{0},%
\overline{u}\right) <J\left( \overline{x},\overline{u}\right) $ or $J\left(
x_{0},\overline{u}\right) =J\left( \overline{x},\overline{u}\right) $. If we
have $J\left( x_{0},\overline{u}\right) =J\left( \overline{x},\overline{u}%
\right) $, then by the Fermat's rule we have (\ref{RowD_KRESKA}). Let us
suppose that $J\left( x_{0},\overline{u}\right) <J\left( \overline{x},%
\overline{u}\right) $, so there exists $\delta >0$ such that 
\begin{equation}
J\left( \overline{x},\overline{u}\right) -J\left( x_{0},\overline{u}\right)
>\delta >0.  \label{beta}
\end{equation}%
We investigate the inequality 
\begin{equation}
\begin{array}{l}
\delta <\left( J\left( x_{n_{i}},u_{n_{i}}\right) -J\left( x_{0},\overline{u}%
\right) \right) -\left( J\left( x_{n_{i}},u_{n_{i}}\right) -J\left( 
\overline{x},u_{n_{i}}\right) \right) \bigskip \\ 
-\left( J\left( \overline{x},u_{n_{i}}\right) -J\left( \overline{x},%
\overline{u}\right) \right)%
\end{array}
\label{granice}
\end{equation}%
which is equivalent to (\ref{beta}). In case $J$ is jointly continuous we
have 
\begin{equation}
\begin{array}{l}
-\left( J\left( x_{n_{i}},u_{n_{i}}\right) -J\left( \overline{x}%
,u_{n_{i}}\right) \right) -\left( J\left( \overline{x},u_{n_{i}}\right)
-J\left( \overline{x},\overline{u}\right) \right) =\bigskip \\ 
-J\left( x_{n_{i}},u_{n_{i}}\right) +J\left( \overline{x},\overline{u}%
\right) \underset{i\rightarrow \infty }{\rightarrow }0.%
\end{array}
\label{j_jpointlu}
\end{equation}%
In case the G\^{a}teaux derivative is bounded on bounded sets recalling that 
\newline
$\left\Vert x_{n_{i}}\right\Vert \leq c$ for all $i\in N$ we have 
\begin{equation}
\left\vert J\left( x_{n_{i}},u_{n_{i}}\right) -J\left( \overline{x}%
,u_{n_{i}}\right) \right\vert \leq \sup_{\left\Vert v\right\Vert \leq
c}\left\Vert \frac{d}{dx}J\left( v,u_{n_{i}}\right) \right\Vert \left\Vert
x_{n_{i}}-\overline{x}\right\Vert \underset{i\rightarrow \infty }{%
\rightarrow }0.  \label{mean_value}
\end{equation}%
Indeed, we fix $n_{i}$ and introduce the auxiliary function $g:\left[ 0,1%
\right] \rightarrow R$ 
\begin{equation*}
g_{n_{i}}\left( t\right) =J\left( tx_{n_{i}}+\left( 1-t\right) \overline{x}%
,u_{n_{i}}\right) =J\left( \overline{x}+t\left( x_{n_{i}}-\overline{x}%
\right) ,u_{n_{i}}\right)
\end{equation*}%
(see that $g_{n_{i}}\left( 0\right) =J\left( \overline{x},u_{n_{i}}\right) $%
, $g_{n_{i}}\left( 1\right) =J\left( x_{n_{i}}\overline{x},u_{n_{i}}\right) $%
) and we have by the Mean Value Theorem that there exists some $\xi \in
\left( 0,1\right) $ such that 
\begin{equation*}
\left\vert g_{n_{i}}\left( 0\right) -g_{n_{i}}\left( 1\right) \right\vert
=\left\vert \frac{d}{dt}g_{n_{i}}\left( \xi \right) \right\vert =\left\vert
\left\langle \frac{d}{dx}J\left( \xi x_{n_{i}}+\left( 1-\xi \right) 
\overline{x},u_{n_{i}}\right) ,x_{n_{i}}-\overline{x}\right\rangle
\right\vert
\end{equation*}%
since $\left\Vert tx_{n_{i}}+\left( 1-t\right) \overline{x}\right\Vert \leq
c $ and thus (\ref{mean_value}) holds. By (\ref{mean_value}) and by
continuity of $J$ with respect to $u$ for any $x\in E$ we see that 
\begin{equation}
\lim_{i\rightarrow \infty }\left( J\left( \overline{x},u_{n_{i}}\right)
-J\left( \overline{x},\overline{u}\right) \right) =0\text{ and }%
\lim\nolimits_{i\rightarrow \infty }\left( J\left(
x_{n_{i}},u_{n_{i}}\right) -J\left( \overline{x},u_{n_{i}}\right) \right) =0.
\label{der_bounded}
\end{equation}%
Finally, since $x_{n_{i}}$ minimizes $x\rightarrow J\left(
x,u_{n_{i}}\right) $ over $E$ we get $J\left( x_{n_{i}},u_{n_{i}}\right)
\leq J\left( x_{0},u_{n_{i}}\right) $ and next 
\begin{equation}
\lim_{i\rightarrow \infty }\left( J\left( x_{n_{i}},u_{n_{i}}\right)
-J\left( x_{0},\overline{u}\right) \right) \leq \lim_{i\rightarrow \infty
}\left( J\left( x_{0},u_{n_{i}}\right) -J\left( x_{0},\overline{u}\right)
\right) =0.  \label{inf_lim_inf}
\end{equation}%
So putting (\ref{j_jpointlu}), (\ref{inf_lim_inf}) into (\ref{granice}) in
case $J$ is jointly continuous and (\ref{der_bounded}), (\ref{inf_lim_inf})
in the other case, we see that $\delta \leq 0$, which is a contradiction.
Thus $J\left( \overline{x},\overline{u}\right) =\inf_{y\in E}J\left( y,%
\overline{u}\right) $ and thus (\ref{RowD_KRESKA}) holds.
\end{proof}

\section{Proofs of main results and some corollaries}

\begin{proof}[Proof of Theorem \protect\ref{TW1}]
In order to prove Theorem \ref{TW1} we need to demonstrate that all
assumptions of Theorem \ref{ogolna_stabilnosc} are satisfied. Firstly, let $%
M+Q$ be positive definite. We see that (\ref{a_b}) is satisfied by (\ref%
{r_1_coerc_pos}). We see that $F\left( k,0,u\left( y\right) \right) =0$ so
that $J\left( 0,u\right) =\dsum\limits_{k=1}^{T}\int_{0}^{0}f\left(
k,t,u\left( k\right) \right) dt=0$ and (\ref{alfa}) holds. By \textbf{A1} we
see that $J$ is jointly continuous in $\left( x,u\right) $. Next, we chose a
subsequence $\left\{ u_{n_{i}}\right\} _{i=1}^{\infty }\subset L_{M}$ from a
sequence $\left\{ u_{n}\right\} _{n=1}^{\infty }\subset L_{M}$ such that $%
\lim_{i\rightarrow \infty }u_{n_{i}}=\overline{u}$. Such a subsequence
necessarily exists since $C\left( \left[ 1,T\right] ,R\right) $ is a finite
dimensional space. Next, we chose a corresponding sequence $\left\{
x_{n_{i}}\right\} _{i=1}^{\infty }\subset E$ and rename both sequences as $%
\left\{ u_{n}\right\} _{n=1}^{\infty }$ and $\left\{ x_{n}\right\}
_{n=1}^{\infty }$. Thus all assumptions of Theorem \ref{ogolna_stabilnosc}
are satisfied.$\bigskip $

Secondly, when $M+Q$ is negative definite we multiply functional (\ref{AcFun}%
) by $-1$ and apply the above reasoning.$\bigskip $
\end{proof}

In order to prove Theorem \ref{TW2} we proceed as in the second part of the
proof of Theorem \ref{TW1}. Exactly in the same manner as in the proof of
Theorem \ref{TW1}, we prove Theorem \ref{tw_r=2}.\bigskip

Theorem \ref{ogolna_stabilnosc} suggests that $f$ need not be jointly
continuous in its all variables. While in case of a discrete variable $k$ it
is equivalent to assume that either $f\in C\left( \left[ 1,T\right] \times
R\times \left[ -M,M\right] ,R\right) $ or $f\in C\left( R\times \left[ -M,M%
\right] ,R\right) $ for all $k\in \left[ 1,T\right] $ this is not the case
with respect to other variables. Hence in order to get results concerning
the dependence on a parameter, we may assume that \textbf{A1} is replaced
with the following one:\bigskip

\textbf{A1a }$f:\left[ 1,T\right] \times R\times \left[ -M,M\right]
\rightarrow R;$ \textit{for all }$k\in \left[ 1,T\right] $\textit{\ and all }%
$u\in \left[ -M,M\right] $\textit{\ function }$x\rightarrow f\left(
k,x,u\right) $\textit{\ is continuous; for all }$k\in \left[ 1,T\right] $%
\textit{\ and all }$x\in R$\textit{\ function }$u\rightarrow f\left(
k,x,u\right) $ \textit{is continuous;\ }$p\in C\left( \left[ 0,T+1\right]
,R\right) ,$\textit{\ }$q,g\in C\left( \left[ 1,T\right] ,R\right) $\textit{%
; }$g\left( k_{1}\right) \neq 0$\textit{\ for certain }$k_{1}\in \left[ 1,T%
\right] $\textit{; for any }$d>0$ \textit{there exists a function }$h\in
C\left( \left[ 1,T\right] ,R\right) $ \textit{such that} 
\begin{equation}
\left\vert f\left( k,x,u\right) \right\vert \leq h\left( k\right) \text{ 
\textit{for all} }k\in \left[ 1,T\right] \text{\textit{,} }x\in \left[ -d,d%
\right] \text{, }u\in \left[ -M,M\right] .  \label{ogr_h}
\end{equation}%
\bigskip

All Theorems \ref{TW1}, \ref{TW2}, \ref{tw_r=2} are valid with A1 replaced
by A1a. What must be shown is the boundedness of the G\^{a}teaux derivative
of the action functional on bounded subsets of $R^{T}$. Such a property
follows by (\ref{ogr_h}). Indeed, we have the following

\begin{lemma}
Assume \textbf{A1a}. The G\^{a}teaux derivative of a functional $%
x\rightarrow J\left( x,u\right) $ given by (\ref{AcFun}) is bounded on a set 
$\left[ 1,T+1\right] \times \left[ -d,d\right] \times \left[ -M,M\right] $,
where $d>0$ is a arbitrarily fixed constant.
\end{lemma}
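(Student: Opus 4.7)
The plan is to compute the Gâteaux derivative of $J$ with respect to $x$ in closed form and then bound each of its three constituent terms separately, using the matrix structure, the local bound \eqref{ogr_h} from \textbf{A1a}, and the fixed data $g$.

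First I would write down $\frac{d}{dx}J(x,u)$ explicitly. Since $F(k,y,u)=\int_{0}^{y}f(k,t,u)\,dt$, the quadratic part contributes $(M+Q)x$, the integral part contributes the vector whose $k$-th coordinate is $-f(k,x(k),u(k))$, and the linear forcing term contributes $g$. Thus, identifying $R^{T}$ with its dual via the Euclidean inner product,
\begin{equation*}
\frac{d}{dx}J(x,u)=(M+Q)x-F_{x}(x,u)+g,
\end{equation*}
where $F_{x}(x,u)\in R^{T}$ is the vector with entries $f(k,x(k),u(k))$, $k\in[1,T]$. (This is the same relation used earlier to identify critical points of $J$ with solutions of \eqref{emden}--\eqref{emden-bd}.)

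Next I would bound each term on the set $\{x\in R^{T}:|x(k)|\leq d \text{ for all }k\}\times L_{M}$ (which is what the statement refers to). The linear term satisfies $|(M+Q)x|\leq \|M+Q\|\cdot|x|\leq \|M+Q\|\,d\sqrt{T}$, and $|g|=\bigl(\sum_{k=1}^{T}g(k)^{2}\bigr)^{1/2}$ is a fixed constant independent of $x$ and $u$. For the nonlinear term I would invoke \textbf{A1a}: given the constant $d>0$, there exists $h\in C([1,T],R)$ such that $|f(k,x(k),u(k))|\leq h(k)$ for every $k\in[1,T]$ whenever $|x(k)|\leq d$ and $|u(k)|\leq M$; consequently $|F_{x}(x,u)|\leq\bigl(\sum_{k=1}^{T}h(k)^{2}\bigr)^{1/2}$, a constant depending only on $d$ and $M$. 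Adding the three estimates yields a bound on $\bigl|\tfrac{d}{dx}J(x,u)\bigr|$ depending only on $d$ and on the fixed data $M$, $Q$, $g$, $h$, which is precisely the asserted boundedness on bounded sets.

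There is essentially no obstacle here: the whole argument is a direct application of \eqref{ogr_h} together with the trivial matrix-norm estimate, and the only thing to be careful about is that \textbf{A1a} provides the uniform majorant $h(k)$ componentwise but only on the rectangle $[-d,d]\times[-M,M]$ in the $(x,u)$-variables; this is exactly what we use and no convexity or global growth is needed. Hence the lemma follows.
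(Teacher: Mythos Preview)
Your proposal is correct and follows exactly the approach the paper indicates: the paper does not give an explicit proof of this lemma, remarking only that ``such a property follows by (\ref{ogr_h}),'' and your argument---writing the derivative as $(M+Q)x-F_{x}(x,u)+g$ and bounding the three summands via the matrix norm, the majorant $h$ from \textbf{A1a}, and the fixed vector $g$---is precisely the intended justification spelled out in full.
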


\section{Further applications}

The discrete Emden-Fowler equation can also be considered with other type of
general boundary conditions, compare with \cite{guojde}. In this section we
consider the discrete equation 
\begin{equation}
\Delta \left( p\left( k-1\right) \Delta x\left( k-1\right) \right) +q\left(
k\right) x\left( k\right) =f\left( k,x\left( k\right) ,u\left( k\right)
\right) +g\left( k\right)  \label{em_fowl2}
\end{equation}%
subject to a parameter $u\in L_{M}$ and with boundary conditions 
\begin{equation}
x\left( 0\right) +\alpha _{1}x\left( 1\right) =A_{1}\text{, }x\left(
T+1\right) +\beta _{1}x\left( T\right) =B_{1},  \label{bd_em_fowl2}
\end{equation}%
where $\alpha _{1}$, $\beta _{1}$, $A_{1}$, $B_{1}$ are fixed constants. We
assume \textbf{A1}. Solutions to (\ref{em_fowl2})-(\ref{bd_em_fowl2}) being
elements of a finite dimensional space%
\begin{equation*}
E_{1}=\left\{ v:\left[ 0,T+1\right] \rightarrow R:v\left( 0\right) +\alpha
_{1}v\left( 1\right) =A_{1}\text{, }v\left( T+1\right) +\beta _{1}v\left(
T\right) =B_{1}\right\}
\end{equation*}%
are identified with vectors from $R^{T}$. The action functional $%
J_{1}:R^{T}\rightarrow R$ for problem (\ref{em_fowl2})-(\ref{bd_em_fowl2})
for a fixed $u\in L_{M}$ reads 
\begin{equation*}
J_{1}\left( x,u\right) =\frac{1}{2}\left\langle Mx,x\right\rangle
+\left\langle q,x\right\rangle -\sum_{k=1}^{T}F\left( k,x\left( k\right)
,u\left( k\right) \right) +\sum_{k=1}^{T}g\left( k\right) x\left( k\right) ,
\end{equation*}%
where $c\left( k\right) =q\left( k\right) -p\left( k\right) -p\left(
k+1\right) $ and 
\begin{equation*}
P=\left[ 
\begin{array}{cccccc}
c\left( 1\right) -\alpha _{1}p\left( 1\right) & p\left( 1\right) & 0 & \ldots
& 0 & 0 \\ 
p\left( 2\right) & c\left( 2\right) & p\left( 3\right) & \ldots & 0 & 0 \\ 
0 & p\left( 3\right) & c\left( 3\right) & \ldots & 0 & 0 \\ 
\vdots & \vdots & \vdots & \ddots & \vdots & \vdots \\ 
0 & 0 & 0 & \ldots & c\left( T-1\right) & p\left( T\right) \\ 
0 & 0 & 0 & \ldots & p\left( T\right) & c\left( T\right) -\beta _{1}x\left(
T+1\right)%
\end{array}%
\right] ,
\end{equation*}%
\begin{equation*}
q=\left[ 
\begin{array}{c}
p\left( 1\right) A_{1} \\ 
0 \\ 
\vdots \\ 
0 \\ 
p\left( T+1\right) B_{1}%
\end{array}%
\right] .
\end{equation*}

Theorems \ref{TW1}, \ref{TW2} and \ref{tw_r=2} remain valid with the
understanding that now matrix $M+Q$ is replaced by $P$. In what follows $%
a_{P},$ $b_{P}$ have the same meaning as $a_{Q+M}$, $b_{Q+M}$. The term $%
\left\langle q,x\right\rangle $ has no impact on the coercivity or
anti-coercivity of $J_{2}$. As an example, we formulate the version of
Theorem \ref{tw_r=2} assuming what follows. $\bigskip $

\textbf{A6} \textit{let }$M+Q$\textit{\ be positive definite and let there
exist constants }$\varepsilon _{1}\in \left( 0,2a_{P}\right) $, $\varepsilon
_{2}\in R$\textit{\ such that } 
\begin{equation*}
f\left( k,y,u\right) \leq \varepsilon _{1}\left\vert y\right\vert
+\varepsilon _{2}
\end{equation*}%
\textit{uniformly for }$u\in \left[ -M,M\right] $\textit{, }$k\in \left[ 1,T%
\right] $\textit{\ and }$\left\vert y\right\vert \geq B$\textit{, where }$%
B>0 $\textit{\ is certain (possibly large) constant.\bigskip }

\textbf{A7} \textit{let }$M+Q$\textit{\ be negative definite and let there
exist constants }$\varepsilon _{1}\in \left( 0,2b_{P}\right) $, $\varepsilon
_{2}\in R$\textit{\ such that } 
\begin{equation*}
f\left( k,y,u\right) \leq \varepsilon _{1}\left\vert y\right\vert
+\varepsilon _{2}
\end{equation*}%
\textit{uniformly for }$u\in \left[ -M,M\right] $\textit{, }$k\in \left[ 1,T%
\right] $\textit{\ and }$\left\vert y\right\vert \geq B$\textit{, where }$%
B>0 $\textit{\ is certain (possibly large) constant.\bigskip }

\begin{theorem}[$r=2$]
Assume either \textbf{A1 }and \textbf{A6 }or \textbf{A1 }and \textbf{A7}.
For any fixed $u\in L_{M}$ there exists at least one non trivial solution $%
x\in V_{u}$ to problem (\ref{em_fowl2})-(\ref{bd_em_fowl2}). Let $\left\{
u_{n}\right\} _{n=1}^{\infty }\subset L_{M}$ a sequence of parameters. For
any sequence $\left\{ x_{n}\right\} _{n=1}^{\infty }$ of solutions $x_{n}$
to the problem (\ref{em_fowl2})-(\ref{bd_em_fowl2}) corresponding to $u_{n}$
and such that 
\begin{equation*}
x_{n}\in \left\{ x\in R^{T}:J_{1}\left( x,u\right) =\inf_{v\in
R^{T}}J_{1}\left( v,u\right) ,\text{ }\frac{d}{dx}J_{1}\left( x,u\right)
=0\right\}
\end{equation*}%
there exist subsequences $\left\{ x_{n_{i}}\right\} _{i=1}^{\infty }\subset
R^{T}$, $\left\{ u_{n_{i}}\right\} _{i=1}^{\infty }\subset L_{M}$ and
elements $\overline{x}\in R^{T}$, $\overline{u}\in L_{M}$ such that $%
\lim_{i\rightarrow \infty }x_{n_{i}}=\overline{x}$, $\lim_{n\rightarrow
\infty }u_{n_{i}}=\overline{u}$. Moreover, $\overline{x}$ satisfies (\ref%
{em_fowl2})-(\ref{bd_em_fowl2}) with $\overline{u}$, i.e. 
\begin{equation*}
\Delta \left( p\left( k-1\right) \Delta \overline{x}\left( k-1\right)
\right) +q\left( k\right) \overline{x}\left( k\right) =f\left( k,\overline{x}%
\left( k\right) ,\overline{u}\left( k\right) \right) +g\left( k\right) ,
\end{equation*}%
\begin{equation*}
\overline{x}\left( 0\right) +\alpha \overline{x}\left( 1\right) =A\text{, }%
\overline{x}\left( T+1\right) +\beta \overline{x}\left( T\right) =B
\end{equation*}%
and $J_{1}\left( \overline{x},\overline{u}\right) =\inf_{v\in
R^{T}}J_{1}\left( v,\overline{u}\right) ,$ $\frac{d}{dx}J_{1}\left( 
\overline{x},\overline{u}\right) =0.$
\end{theorem}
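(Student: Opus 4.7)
The plan is to reduce the statement to a direct application of Theorem \ref{ogolna_stabilnosc} to the action functional $J_{1}$, with the matrix $P$ playing the role that $M+Q$ played for the periodic problem. The only genuinely new feature of $J_{1}$ is the additional linear term $\left\langle q,x\right\rangle $, which is controlled by $\left\vert q\right\vert \left\vert x\right\vert $ and thus cannot interfere with a quadratic coercivity estimate. The overall scheme therefore mirrors the proofs of Lemma \ref{exist_r=2} and Theorem \ref{tw_r=2}.

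First, I would establish the uniform coercivity (under \textbf{A6}) or anti-coercivity (under \textbf{A7}) of $u\mapsto J_{1}(\cdot ,u)$. Assuming \textbf{A6}, I use positive definiteness of $P$ together with the $r=2$ growth condition exactly as in (\ref{r_1_coerc_pos}), obtaining for $\left\vert y\right\vert \geq B$ an estimate of the form
\begin{equation*}
J_{1}(y,u)\geq a_{P}\left\vert y\right\vert ^{2}-\tfrac{\varepsilon _{1}}{2}c_{1}\left\vert y\right\vert ^{2}-\varepsilon _{2}\sqrt{T}\left\vert y\right\vert -\left\vert y\right\vert \left\vert \overline{g}\right\vert -\left\vert q\right\vert \left\vert y\right\vert .
\end{equation*}
Since $\varepsilon _{1}<2a_{P}$, the leading coefficient $a_{P}-\tfrac{\varepsilon _{1}}{2}c_{1}$ is positive (after absorbing the H\"older constant into the assumption, as done in the proof of Lemma \ref{exist_sublinear}) and hence (\ref{a_b}) holds with $\mu =2$ and constants independent of $u\in L_{M}$. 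Under \textbf{A7} I apply the same reasoning to $-J_{1}$, exploiting $\left\langle Py,y\right\rangle \leq -b_{P}\left\vert y\right\vert ^{2}$.

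Next, I verify the remaining hypotheses of Theorem \ref{ogolna_stabilnosc}. Because $F(k,0,u(k))=0$, evaluation of $J_{1}$ at $x=0$ gives $J_{1}(0,u)=0$ for every $u\in L_{M}$, so (\ref{alfa}) holds trivially with $\alpha =0$. Joint continuity of $J_{1}$ on $R^{T}\times L_{M}$ follows from \textbf{A1}. Given a sequence $\{u_{n}\}\subset L_{M}$, the finite dimensionality of $C([1,T],R)$ and the norm bound $\left\Vert u_{n}\right\Vert _{C}\leq M$ guarantee a convergent subsequence $u_{n_{i}}\to \overline{u}\in L_{M}$; passing to the corresponding subsequence of solutions and invoking Theorem \ref{ogolna_stabilnosc} (applied to $J_{1}$ under \textbf{A6}, or to $-J_{1}$ under \textbf{A7}) produces the desired $\overline{x}\in R^{T}$ with $J_{1}(\overline{x},\overline{u})=\inf_{v\in R^{T}}J_{1}(v,\overline{u})$ and $\tfrac{d}{dx}J_{1}(\overline{x},\overline{u})=0$. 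Relating critical points of $J_{1}$ to solutions of (\ref{em_fowl2})--(\ref{bd_em_fowl2}) in the usual way (by computing the G\^{a}teaux derivative component-wise and reading off the equation and boundary conditions from $E_{1}$) yields the claimed solution.

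The only potentially delicate point is whether the linear perturbation $\left\langle q,x\right\rangle $ and the constant boundary data encoded in $q$ can spoil the uniform constant $a$ appearing in (\ref{a_b}); this is where I expect to have to be slightly careful, but since $\left\vert \left\langle q,x\right\rangle \right\vert \leq \left\vert q\right\vert \left\vert x\right\vert $ is a linear-in-$\left\vert x\right\vert $ term with $\left\vert q\right\vert $ independent of $u$, it is absorbed into the lower-order remainder exactly as $\sum g(k)x(k)$ was in Lemma \ref{exist_sublinear}. The rest of the argument is then a near-verbatim repetition of the proofs of Theorems \ref{TW1} and \ref{tw_r=2}.
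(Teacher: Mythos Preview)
Your proposal is correct and follows the paper's own approach: the paper does not give a separate proof of this theorem but simply remarks that Theorems \ref{TW1}, \ref{TW2} and \ref{tw_r=2} carry over with $P$ replacing $M+Q$ and that the linear term $\left\langle q,x\right\rangle $ has no impact on (anti-)coercivity, which is exactly what you verify in detail. One minor remark: in the $r=2$ case there is no H\"older constant $c_{1}$ to absorb, since $\sum_{k=1}^{T}\left\vert y(k)\right\vert ^{2}=\left\vert y\right\vert ^{2}$ exactly, so the leading coefficient is simply $a_{P}-\tfrac{\varepsilon _{1}}{2}>0$ by the hypothesis $\varepsilon _{1}\in (0,2a_{P})$.
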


\end{document}